\newcommand{\CC}{\mathbb{C}}
\newcommand{\PP}{\mathbb{P}}
\newcommand{\ZZ}{\mathbb{Z}} 
\newcommand{\cE}{\mathcal{E}} 
\newcommand{\cN}{\mathcal{N}} 
\newcommand{\cO}{\mathcal{O}} 
\renewcommand{\P}{{\bf{P}}}
\newcommand{\on}[1]{\operatorname{#1}}
\newcommand{\Pic}{\on{Pic}}
\newcommand{\Proj}{\on{Proj}}
\newcommand{\Sym}{\on{Sym}}
\newcommand{\ch}{\ensuremath{\operatorname{ch}}}
\newcommand{\into}{\hookrightarrow}
\newcommand{\onto}{\twoheadrightarrow}
\newcommand{\Aut}{\on{Aut}}
\newtheorem{thm}{Theorem}[section]
\newtheorem{lemma}[thm]{Lemma}
\newtheorem{claim}[thm]{Claim}
\theoremstyle{definition}
\newtheorem{rmk}[thm]{Remark}
\begin{document}

\title{Horospherical 2-Fano varieties}

\author[Carolina Araujo]{Carolina Araujo}
\address{\sc Carolina Araujo\\
IMPA\\
Estrada Dona Castorina 110\\
22460-320 Rio de Janeiro\\ Brazil}
\email{caraujo@impa.br}

\author[Ana-Maria Castravet]{Ana-Maria Castravet}
\address{Ana-Maria Castravet, 
Universit\'e Paris-Saclay, UVSQ, Laboratoire de Math\'ematiques de Versailles, 
45 Avenue des \'Etats Unis, 78035 Versailles, France }
\email{ana-maria.castravet@uvsq.fr}

\date{}

\begin{abstract}
We classify $2$-Fano horospherical varieties with Picard number $1$.
We also review all the known examples of $2$-Fano manifolds 
and investigate the relation between the $2$-Fano condition and 
different notions of stability.
\end{abstract}

\maketitle

\section{Introduction}\label{introduction}

\emph{Fano manifolds} are complex projective manifolds $X$ whose tangent bundles have ample first Chern class $-K_X=c_1(T_X)$.
They play a distinguished role in the birational classification of complex projective varieties, and 
satisfy formidable properties.
For instance, Fano manifolds are rationally connected (\cite{KMM}), i.e., any two points can be connected by a rational curve.
They also satisfy special arithmetic properties:  algebraic families of Fano manifolds over $1$-dimensional bases always have sections (\cite{GHS}). 

In \cite{dJ-S:2fanos_1}, De Jong and Starr introduced a special subclass of Fano manifolds: {\em{$2$-Fano manifolds}} are 
Fano manifolds $X$ with positive second Chern character,
$$\ch_2(T_X)=\frac{1}{2}c_1(T_X)^2-c_2(T_X)>0.$$
This means that $\ch_2(T_X)\cdot S>0$ for every projective surface $S\subset X$.
The $2$-Fano manifolds are expected to enjoy stronger versions of the nice properties of Fano manifolds. This expectation has been confirmed in several special cases. 
For instance, under some conditions, 
$2$-Fano manifolds are covered by rational surfaces (\cite{dJ-S:2fanos_2}, \cite{2fanos}), and 
it is expected that families of $2$-Fano manifolds over $2$-dimensional bases admit meromorphic sections (modulo Brauer obstruction).

On the other hand, very few examples of $2$-Fano manifolds are known:
complete intersections in weighted projective spaces, certain rational homogeneous spaces with Picard number $1$
and some of their linear sections, and a few two-orbit varieties. 
We refer to Section~\ref{section_K-stability} for a survey of all  the known examples of $2$-Fano manifolds.

This paper was motivated by the goal of classifying  $2$-Fano manifolds. 
A natural class of varieties where to look for new examples of $2$-Fano manifolds 
is that of {\it spherical varieties}. These are projective manifolds $X$ admitting an action of  a reductive group
$G$, with a Borel subgroup $B$, such that $X$ has a dense $B$-orbit. 
The class of spherical varieties includes in particular rational homogeneous spaces and toric varieties. 
The classification of rational homogeneous $2$-Fano manifolds with Picard number $1$ was obtained in  \cite{hfanos}.
There has been progress in the classification of toric $2$-Fano manifolds 
(\cite{Nobili2011},  \cite{Sato2012}, \cite{Sato2016}, \cite{SatoSuyama2020}, \cite{SanoSatoSuyama2020},  \cite{toric_2fanos}).
The only known examples of toric $2$-Fano manifolds are projective spaces.

Another important subclass of spherical varieties is that of {\it horospherical varieties}.
These are special spherical varieties with nice geometric and representation theoretic properties. 
They can be defined as spherical $G$-varieties for which the stabiliser of a point in the dense orbit contains
a conjugate of the maximal unipotent subgroup of a Borel subgroup.
In this paper, we investigate the $2$-Fano condition for horospherical varieties  with Picard number $1$.
These varieties were classified by Pasquier in \cite{Pasquier}.
They are either rational homogeneous, or their isomorphism classes are uniquely determined by one
the following triples $(\text{Type}(G), \omega_Y , \omega_Z)$, where  Type$(G)$ is the semisimple Lie type of the reductive group $G$, 
and $\omega_Y$ and $\omega_Z$ are fundamental weights (see Section~\ref{section_horospherical} for details):
\begin{enumerate}
\item[($1$)] $(B_n, \omega_{n-1} , \omega_n)$, with $n\geq 3$;
\item[($2$)] $(B_3, \omega_{1} , \omega_3)$;
\item[($3$)] $(C_n, \omega_{m} , \omega_{m-1})$, with $n\geq 2$ and $2\leq m \leq n$; 
\item[($4$)] $(F_4, \omega_{2} , \omega_3)$; 
\item[($5$)] $(G_2, \omega_{1} , \omega_2)$.
\end{enumerate}

\noindent We compute the second Chern character of each variety in Pasquier's list and verify whether the 
 $2$-Fano condition holds for these manifolds.
As a result, we obtain the following classification of $2$-Fano horospherical varieties with Picard rank $1$. 

\begin{thm}\label{main_thm}
The only non-homogeneous $2$-Fano horospherical varieties with Picard rank $1$ are the one of type (2),  
and  the ones of type (3) with $(n,m)=(3k,2k+1)$ for some $k\geq 1$.
\end{thm}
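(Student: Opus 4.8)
The plan is to replace each variety $X$ in Pasquier's list by the explicit birational model that underlies his classification, and to pull the whole computation back to Schubert calculus on a flag variety. Writing $P = P_Y \cap P_Z$ for the intersection of the two parabolics attached to $\omega_Y$ and $\omega_Z$, the variety $X$ is dominated by a $\PP^1$-bundle $\tilde X = \PP_{G/P}(\mathcal{L}_Y \oplus \mathcal{L}_Z) \xrightarrow{p} G/P$, where $\mathcal{L}_Y,\mathcal{L}_Z$ are the line bundles associated with $\omega_Y,\omega_Z$. Its two disjoint sections $\Sigma_Y,\Sigma_Z$ are contracted by a birational morphism $\sigma \colon \tilde X \to X$ onto the closed $G$-orbits $Y = G/P_Y$ and $Z = G/P_Z$, via the flag-bundle projections $G/P \to G/P_Y$ and $G/P \to G/P_Z$. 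Since $H^\bullet(\tilde X)$ is governed by the Grothendieck relation over the completely known cohomology ring of $G/P$, this model makes every Chern number accessible.

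First I would compute $\ch_2(T_{\tilde X})$ from the relative Euler sequence of $p$ together with the pullback of the tangent sequence of $G/P$, and express it in the basis of $H^4(\tilde X)$ given by $\xi = c_1(\cO_{\tilde X}(1))$, its products with $p^\ast$-classes, and pullbacks of degree-$4$ Schubert classes. The delicate point is to pass from $\tilde X$ to $X$: because $\sigma$ contracts the divisorial sections $\Sigma_Y$ and $\Sigma_Z$ along the flag-bundle projections, the bundles $T_X$ and $\sigma_\ast T_{\tilde X}$ differ along the exceptional loci. I would therefore compare them through the logarithmic tangent sequences of the two boundary divisors — tracking the normal-bundle and discrepancy data of the contraction — to recover $\ch_2(T_X)$ as a genuine class on $X$. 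Since $c_1(T_X) = -K_X$ is already pinned down by the Fano horospherical structure, the real content is the computation of $c_2(T_X)$.

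Next I would identify the surface classes against which $\ch_2(T_X)$ must be tested. Although $X$ has Picard rank $1$, its group of codimension-$2$ cycles has rank at least $2$, and I expect the cone of effective surfaces to be generated by two classes: a minimal Schubert surface $S_Z$ inside a closed orbit, and a ``vertical'' surface $S_0$ obtained as the $\sigma$-image of the restriction of the $\PP^1$-bundle over a line in $G/P$. The $2$-Fano condition $\ch_2(T_X) > 0$ is then equivalent to the two inequalities $\ch_2(T_X)\cdot S_Z > 0$ and $\ch_2(T_X)\cdot S_0 > 0$. Each can be evaluated either from the global formula above, or more robustly by restricting $T_X$ to the surface and using $0 \to T_S \to T_X|_S \to N_{S/X} \to 0$, since these surfaces are rational and their normal bundles are computable inside the homogeneous geometry.

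Carrying this out type by type yields explicit expressions for $\ch_2(T_X)\cdot S_Z$ and $\ch_2(T_X)\cdot S_0$ in terms of the Lie type and of the integers $n,m$. For types (1), (4) and (5) I expect one of the two intersection numbers to be nonpositive, ruling out the $2$-Fano property, while for the single variety of type (2) both are positive. The subtle case is type (3): there the two inequalities constrain the ratio $m/n$ from opposite sides, and combined with the integrality constraints $n \geq 2$ and $2 \leq m \leq n$ they leave only the solutions $2n - 3m + 3 = 0$, i.e. $(n,m) = (3k, 2k+1)$. I expect the main obstacle to be the second step — computing $c_2(T_X)$ correctly across the contraction $\sigma$ and ensuring that no effective surface class has been overlooked — since any error there would shift the thresholds of the type-(3) inequalities and destroy the clean Diophantine conclusion. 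Once the intersection numbers are in hand, the final positivity analysis is elementary.
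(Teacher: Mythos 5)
Your birational model is legitimate---the blow-up of $X$ along $Y\sqcup Z$ is indeed a $\PP^1$-bundle over $G/(P_Y\cap P_Z)$ whose two sections contract to the closed orbits---but the proposal has a genuine gap precisely at the step you flag as delicate: identifying which surface classes must be tested. You assert that the group of codimension-$2$ cycles has rank at least $2$ and that its effective cone is generated by a minimal Schubert surface in a closed orbit together with a vertical surface $S_0$. The first assertion is already false for the variety of type (2), where $b_4(X)=1$ (this follows from \cite[Fact 1.8]{Perrin+} and \cite[Lemma 3.1]{hfanos}); and in the remaining cases nothing in your argument establishes that your two candidate classes generate the cone of effective $2$-cycles (note also that $G/P$ has Picard rank $2$, so ``a line in $G/P$'' is ambiguous and may produce two distinct vertical classes). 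The negative half of the theorem---ruling out types (1), (4), (5) and type (3) with $3m\neq 2n+3$---needs only one effective surface of nonpositive degree against $\ch_2(T_X)$, so your plan would deliver that; but the positive half, that $X^2$ and $X^3(3k,2k+1)$ really are $2$-Fano, requires controlling \emph{all} surface classes. The paper supplies exactly this input by quoting from \cite{Perrin+} that $H_4(X,\ZZ)$ is generated by the classes of its two test surfaces (or has rank $1$ for $X^2$); without an analogous statement for your $S_Z$ and $S_0$, your argument proves only the ``only if'' direction.

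On the computational route, the paper's choice of model is also substantially lighter than yours. It works with the two single blow-ups $\tilde X_Y\to X$ and $\tilde X_Z\to X$, each of which is a projective space bundle over the \emph{opposite} closed orbit, and tests $\ch_2$ against planes lying in the fibers of these bundles. Because those planes are contracted by the bundle projection, every term of the form $p_Z^*(\cdot)$ in the projective-bundle formula for $\ch_2$ vanishes on them, so one never computes $\ch_2(Z)$, $\ch_2(\cE_Z)$, or any Schubert calculus; only the codimensions and indices enter, yielding the closed formula $\ch_2(X)\cdot S_Z=\tfrac{1}{2}(c_{Z/X}-c_{Y/X})+(c_1(X)-c_1(Y))$ and its mirror for $S_Y$. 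Your plan instead requires $\ch_2(T_{G/P})$ for a non-maximal parabolic, the degree-$4$ Grothendieck relation, and a precise comparison of $T_{\tilde X}$ with $T_X$ across the divisorial contraction; the ``logarithmic tangent sequence / discrepancy'' step is only sketched, and the clean way to make it rigorous is to apply the $\ch_2$ blow-up formula of \cite[Lemma 5.1]{dJ-S:2fanos_1} twice. All of this is feasible, but each computation is a place where the tight threshold $3m=2n+3$ in type (3) could be lost, and none of it removes the need for the cohomological input described above.
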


The horospherical variety of type (2) can also be described as a general hyperplane section of 
the $10$-dimensional orthogonal Grassmannian $OG_+(5,10)$ (\cite[Proposition 1.16]{Perrin+}).
It was proved to be $2$-Fano in \cite[Proposition 34]{2fanos2}.
The horospherical varieties of type (3) are known as {\it odd symplectic Grassmannians}, and
those with $(n,m)=(3k,2k+1)$ for some $k\geq 1$ were already known to be $2$-Fano from \cite[Example 5.6]{2fanos}. 

This paper is organized as follows. 
In Section~\ref{section_horospherical}, we provide a background on horospherical varieties.
In Section~\ref{section_proof}, we check the $2$-Fano condition for each variety in Pasquier's list.
In Section~\ref{section_K-stability}, we investigate the relation between the $2$-Fano condition, $K$-stability
and (slope) stability of the tangent bundle.

\medskip

\noindent {\bf Conventions and notation.} 
Throughout this paper we work over the field $\CC$ of complex numbers.
Given a vector bundle $\cE$ on a variety $Z$, we follow Grothendieck's notation and
denote by  $\PP(\cE)$ the projective bundle over $Z$ of one-dimensional quotients 
of the fibers of $\cE$, i.e., $\PP(\cE)=\Proj(\Sym \cE)$. 
On the other hand, given a complex vector space $V$, we denote by $\P(V)$
the projective space of one-dimensional linear subspaces of $V$, so that  $\P(V)= \PP(V^*)$.

\

\paragraph{\textbf{Acknowledgements}}
Carolina Araujo was partially supported by grants from CNPq, Faperj and CAPES/COFECUB. 
Ana-Maria Castravet was supported by the ANR grant FanoHK and the Laboratoire de Math\'ematiques de Versailles. 
Most of this work was developed during Carolina Araujo's visit to the Laboratoire de Math\'emati\-ques de Versailles, 
funded by the ``Brazilian-French Network in Mathematics''.
We are grateful to this program for the financial support. We thank Nicolas Perrin for many useful explanations about horospherical varieties, and our collaborators Roya Beheshti, Kelly Jabbusch, Svetlana Makarova, Enrica Mazzon and Nivedita Viswanathan 
for many rich discussions about $2$-Fano manifolds. 
This paper was conceived as a contribution to ``Edge Volume: 2018-2022'' after our participation in Edge Days 2022.
We thank the organizers and participants of the several Edge Days for making it such a nice conference series. 
Special thanks to Vanya Cheltsov for his aggregating energy.

\section{Horospherical varieties}\label{section_horospherical}

Let $G$ be a complex connected reductive group, and $X$ a spherical
$G$–variety, i.e.,  $X$ has a dense $B$-orbit for a Borel subgroup $B\subset G$.
We say that $X$ is a {\it horospherical variety} if the stabiliser of a point in the dense $B$-orbit contains
a conjugate of the maximal unipotent subgroup of $B$.

Horospherical varieties with Picard number $1$  were classified by Pasquier in \cite{Pasquier}.
They are either rational homogeneous, or their isomorphism classes are uniquely determined by one of
the following triples $(\text{Type}(G), \omega_Y , \omega_Z)$, 
where  Type$(G)$ is the semisimple Lie type of the reductive group $G$, 
and $\omega_Y$ and $\omega_Z$ are fundamental weights:
\begin{enumerate}
\item[($1$)] $(B_n, \omega_{n-1} , \omega_n)$, with $n\geq 3$;
\item[($2$)] $(B_3, \omega_{1} , \omega_3)$;
\item[($3$)] $(C_n, \omega_{m} , \omega_{m-1})$, with $n\geq 2$ and $2\leq m \leq n$; 
\item[($4$)] $(F_4, \omega_{2} , \omega_3)$; 
\item[($5$)] $(G_2, \omega_{1} , \omega_2)$.
\end{enumerate} 

\noindent Following \cite{Perrin+}, we denote by $X^1(n)$, $X^2$, $X^3(n,m)$, $X^4$ and $X^5$ the corresponding horospherical varieties, where the superscript indicates the class that a variety belongs to. 
We refer to this list of varieties as Pasquier’s list.
Next we explain the geometric characterization of non–homogeneous horospherical varieties from \cite{Pasquier}.  
From now on, $X$ always denotes a smooth projective non–homogeneous horospherical variety of Picard number $1$ 
associated to the triple $(\text{Type}(G), \omega_Y , \omega_Z)$. 
Let $V_Y$ and $V_Z$ be the irreducible $G$–representations with highest weights $\omega_Y$ and $\omega_Z$,
respectively, and $v_Y$ and $v_Z$ the corresponding lowest weight vectors.
We denote by $P_Y\subset G$ the stabilizer of $[v_Y]$ in $\P(V_Y)$, and by $P_Z\subset G$ the stabilizer of $[v_Z]$ in $\P(V_Z)$. 
They are maximal parabolic subgroups of $G$, so that the quotients $G/P_Y$ and $G/P_Z$ are rational homogeneus spaces 
of Picard number $1$.
We assume that both $P_Y$ and $P_Z$ contain the same Borel subgroup $B$.
The horospherical variety $X$ is the closure in $\P(V_Y\oplus V_Z)$ of the $G$–orbit of $[v_Y + v_Z]$.
The group $G$ acts on $X$ with two closed orbits, $Y\cong G/P_Y$ and $Z\cong G/P_Z$, and one dense open orbit 
$U=X\setminus (Y\cup Z)$.
We denote by $\cN_Y$ the normal bundle of $Y$ in $X$, and by $\cN_Z$ the normal bundle of $Z$ in $X$.
Let $\pi_Y:\tilde X_Y \to X$ be the blow-up of $X$ along $Y$, and denote by $E_Y\subset \tilde X_Y$ the exceptional divisor.
Then there is a natural projective space bundle $p_Z:\tilde X_Y \to Z$. 
As a projective bundle over $Z$, we have an isomorphism $\tilde X_Y\cong\PP(\cE_Y)$, where $\cE_Y={\cN_Y}^*\oplus\cO_Y$.
The exceptional divisor $E_Y$ is isomorphic to $G/(P_Y \cap P_Z)$, and the projections to $Y$ and $Z$ (via 
the restrictions of $\pi_Y$ and $p_Z$, respectively) correspond to the natural projections $G/(P_Y \cap P_Z)\to G/P_Y$ and $G/(P_Y \cap P_Z)\to G/P_Z$.
The situation is illustrated in the following diagram: 
 \[
  \begin{tikzpicture}[xscale=1.5,yscale=-1.2]
    \node (A0_0) at (0, 0) {$E_Y$};
    \node (A1_0) at (1, 0) {$\tilde X_Y$};
    \node (A2_0) at (2, 0) {$Z$};
    \node (A0_1) at (0, 1) {$Y$};
    \node (A1_1) at (1, 1) {$X$};
    \path (A0_0) edge [->]node [auto] {} (A1_0);
    \path (A0_0) edge [->]node [auto] {} (A0_1);
    \path (A0_0) edge [->,bend right=45]node [auto] {}(A2_0);
    \path (A1_0) edge [->]node [auto] {$\scriptstyle{\pi_Y}$} (A1_1);
    \path (A1_0) edge [->]node [auto] {$\scriptstyle{p_Z}$} (A2_0);
    \path (A0_1) edge [->]node [auto] {} (A1_1);
  \end{tikzpicture}
\]

\noindent Similarly, denoting by $\pi_Z:\tilde X_Z \to X$ the blow-up of $X$ along $Z$, with exceptional divisor 
$E_Z\subset \tilde X_Z$, there is a natural projective space bundle $p_Y:\tilde X_Z \to Y$.
As a projective bundle over $Y$, we have an isomorphism $\tilde X_Z\cong\PP(\cE_Z)$, where $\cE_Z={\cN_Z}^*\oplus\cO_Z$.
The exceptional divisor $E_Z$ is isomorphic to $G/(P_Y \cap P_Z)$, and the projections to $Y$ and $Z$ (via 
the restrictions of $p_Y$ and $\pi_Z$, respectively) correspond to the natural projections $G/(P_Y \cap P_Z)\to G/P_Y$ and $G/(P_Y \cap P_Z)\to G/P_Z$.
The situation is illustrated in the following diagram: 
 \[
  \begin{tikzpicture}[xscale=1.5,yscale=-1.2]
    \node (A0_0) at (0, 0) {$E_Z$};
    \node (A1_0) at (1, 0) {$\tilde X_Z$};
    \node (A2_0) at (2, 0) {$Y$};
    \node (A0_1) at (0, 1) {$Z$};
    \node (A1_1) at (1, 1) {$X$};
    \path (A0_0) edge [->]node [auto] {} (A1_0);
    \path (A0_0) edge [->]node [auto] {} (A0_1);
    \path (A0_0) edge [->,bend right=45]node [auto] {}(A2_0);
    \path (A1_0) edge [->]node [auto] {$\scriptstyle{\pi_Z}$} (A1_1);
    \path (A1_0) edge [->]node [auto] {$\scriptstyle{p_Y}$} (A2_0);
    \path (A0_1) edge [->]node [auto] {} (A1_1);
  \end{tikzpicture}
\]

\begin{rmk}\label{Aut(X)}
In the above geometric description of the non–homogeneous horospherical variety $X$, the roles of $Y$ and $Z$ are interchangeable.
However, there is an important geometric distinction between them. 
The automorphism group $\Aut(X)$ is a semi–direct product of $G$ with its unipotent radical, and it acts on $X$ with two orbits: 
the closed orbit $Z$ and the dense open orbit $X\setminus Z = U\cup Y$.
These varieties are often referred to as \emph{two-orbit varieties}.
\end{rmk}

We end this section by collecting in Table~\ref{table} below some numerical invariants of $X$, $Y$ and $Z$ 
for each horospherical variety in Pasquier’s list. 
In what follows, we denote by $c_1(X)$ the index of the Fano variety $X$, so that 
$-K_X = c_1(X)\cdot H_X$,
where $H_X$ is the ample generator of $\Pic(X)$, 
and similarly for $Y$ and $Z$. We denote by $c_{Y/X}$ and $c_{Z/X}$ the codimensions of $Y$ and $Z$ in $X$, respectively. 

\begin{table}[h] \label{table}
\begin{tabular}{|c||c|c|c|c|c|}
\hline
Type       & \textbf{$c_1(X)$} & \textbf{$c_1(Y)$} & \textbf{$c_1(Z)$} & \textbf{$c_{Y/X}$} & \textbf{$c_{Z/X}$} \\ \hline \hline
$X^1(n)$   & $n+2$             & $n+1$             & $2n$              & $2$                & $n$                \\ \hline
$X^2$      & $7$               & $5$               & $6$               & $4$                & $3$                \\ \hline
$X^3(n,m)$ & $2n+2-m$          & $2n+1-m$          & $2n+2-m$          & $m$                & $2(n+1-m)$         \\ \hline
$X^4$      & $6$               & $5$               & $7$               & $3$                & $3$                \\ \hline
$X^5$      & $4$               & $3$               & $5$               & $2$                & $2$                \\ \hline
\end{tabular}
\medskip
\caption{Numerical invariants of $X$, $Y$ and $Z$.}
\end{table}

\section{The $2$-Fano condition}\label{section_proof}

In this section, we check the $2$-Fano condition for each non-homogeneous horospherical
variety $X$ in Pasquier's list.
As we saw in Section~\ref{section_horospherical}, for each $X$ there is a complex connected reductive group $G$
acting on $X$ with two closed orbits, $Y$ and $Z$, and one dense open orbit $U=X\setminus (Y\cup Z)$.
Moreover, the blow-up of $X$ along $Y$ admits a structure of projective space bundle over $Z$: 
 \[
  \begin{tikzpicture}[xscale=1.5,yscale=-1.2]
    \node (A1_0) at (1, 0) {$\tilde X_Y$};
    \node (A2_0) at (2, 0) {$Z$.};
    \node (A1_1) at (1, 1) {$X$};
    \path (A1_0) edge [->]node [auto] {$\scriptstyle{\pi_Y}$} (A1_1);
    \path (A1_0) edge [->]node [auto] {$\scriptstyle{p_Z}$} (A2_0);
  \end{tikzpicture}
\]
(Similarly, the blow-up of $X$ along $Z$ admits a structure of projective space bundle over $Y$.)
In order to compute the second Chern character of $X$, we first relate it to the second Chern character of 
the blow-up $\tilde X_Y$ using Lemma~\ref{Lem:ch_blowup} below. 
We then compute the second Chern character of $\tilde X_Y$
using the projective space bundle structure $p_Z:\tilde X_Y \to Z$ and Lemma~\ref{P-bundle} below. 

The following formula relates the second Chern character of a variety with its blow-up. 

\begin{lemma}\cite[Lemma 5.1]{dJ-S:2fanos_1}
\label{Lem:ch_blowup}
Let $X$ be a smooth projective variety, and $Y\subset X$ a smooth projective subvariety of codimension 
$c \geq 2$ and normal bundle $\mathcal{N}_{Y/X}$. 
Let $\pi_Y:\tilde X_Y \to X$ be the blow-up of $X$ along $Y$, with  exceptional divisor $E\subset \tilde X_Y$, 
and set $\sigma={\pi_Y}_{|E}$.
 \[
  \begin{tikzpicture}[xscale=1.5,yscale=-1.2]
    \node (A0_0) at (0, 0) {$E$};
    \node (A1_0) at (1, 0) {$\tilde X_Y$};
    \node (A0_1) at (0, 1) {$Y$};
    \node (A1_1) at (1, 1) {$X$};
    \path (A0_0) edge [->]node [auto] {$\scriptstyle{j}$} (A1_0);
    \path (A0_0) edge [->]node [auto] {$\scriptstyle{\sigma}$} (A0_1);
    \path (A1_0) edge [->]node [auto] {$\scriptstyle{\pi_Y}$} (A1_1);
    \path (A0_1) edge [->]node [auto] {} (A1_1);
  \end{tikzpicture}
\]
Then
$$ ch_2 (\tilde X_Y) =
\pi_Y^* ch_2 (X) + \frac{c+1}{2} E^2 - j_*\sigma^* c_1(\mathcal{N}_{Y/X})
.$$
\end{lemma}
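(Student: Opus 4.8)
The plan is to compare the tangent bundle of the blow-up $\tilde X_Y$ with the pullback $\pi_Y^* T_X$ directly, and to read off the difference of their second Chern characters. Since $\pi_Y$ is a birational morphism of smooth varieties, the differential $d\pi_Y\colon T_{\tilde X_Y}\to \pi_Y^* T_X$ is an isomorphism over $X\setminus Y$; as $T_{\tilde X_Y}$ is locally free, hence torsion free, $d\pi_Y$ is injective as a map of sheaves, and its cokernel $\mathcal{C}$ is supported on the exceptional divisor $E$. Thus I obtain a short exact sequence $0\to T_{\tilde X_Y}\to \pi_Y^* T_X\to \mathcal{C}\to 0$, and by additivity of the Chern character $\ch(T_{\tilde X_Y})=\pi_Y^*\ch(T_X)-\ch(\mathcal{C})$. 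Taking degree-two parts and using $\ch_2(\pi_Y^* T_X)=\pi_Y^*\ch_2(X)$, everything reduces to computing $\ch_2(\mathcal{C})$.

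The key step is to identify $\mathcal{C}$. Write $\cN=\cN_{Y/X}$, so that $E$ is the projectivization of $\cN$ parametrizing lines, with projection $\sigma$ and tautological sub-line-bundle $\cO_E(-1)\into\sigma^*\cN$; recall $\cO_E(E)\cong\cO_E(-1)$, and set $e:=c_1(\cO_E(E))=j^* E$. A local computation along $E$ shows that $d\pi_Y$ contracts the relative tangent directions of $\sigma$ and maps the normal direction $\cN_{E/\tilde X_Y}=\cO_E(-1)$ isomorphically onto the tautological subbundle of $\sigma^*\cN\subset\sigma^*(T_X|_Y)=\pi_Y^* T_X|_E$. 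Consequently $\mathcal{C}=j_*\mathcal{Q}$, where $\mathcal{Q}=\sigma^*\cN/\cO_E(-1)$ is the tautological quotient bundle on $E$, of rank $\rk\mathcal{Q}=c-1$ and first Chern class $c_1(\mathcal{Q})=\sigma^* c_1(\cN)-e$. Establishing this identification precisely (and fixing the sign conventions relating $\cO_E(E)$, $\cO_E(-1)$ and $e$) is the main obstacle; once it is in place, the remainder is bookkeeping.

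Finally I would compute $\ch_2(j_*\mathcal{Q})$ by Grothendieck--Riemann--Roch for the divisorial closed immersion $j\colon E\into\tilde X_Y$, whose normal bundle is $\cO_E(E)$. This gives $\ch(j_*\mathcal{Q})=j_*\big(\ch(\mathcal{Q})\cdot\on{td}(\cO_E(E))^{-1}\big)$, with $\on{td}(\cO_E(E))^{-1}=1-\tfrac{e}{2}+\cdots$. Since $j_*$ raises degree by one, $\ch_2(j_*\mathcal{Q})$ is the image under $j_*$ of the degree-one part of $\ch(\mathcal{Q})\cdot\on{td}(\cO_E(E))^{-1}$, namely $c_1(\mathcal{Q})-\tfrac{\rk\mathcal{Q}}{2}\,e=\sigma^* c_1(\cN)-\tfrac{c+1}{2}e$. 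Therefore $\ch_2(j_*\mathcal{Q})=j_*\sigma^* c_1(\cN)-\tfrac{c+1}{2}j_*(e)$, and using the projection formula $j_*(e)=j_*(j^* E)=E^2$ I conclude that $\ch_2(\tilde X_Y)=\pi_Y^*\ch_2(X)-\ch_2(j_*\mathcal{Q})=\pi_Y^*\ch_2(X)+\tfrac{c+1}{2}E^2-j_*\sigma^* c_1(\cN)$, as claimed. The fact that this computation reproduces the stated formula on the nose serves as a useful internal check on the conventions chosen in the second step.
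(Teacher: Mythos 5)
Your proof is correct and takes essentially the same route as the source the paper relies on: the paper does not reprove this lemma but cites it directly from \cite{dJ-S:2fanos_1}, where the argument is exactly the one you give, namely the exact sequence $0\to T_{\tilde X_Y}\to \pi_Y^*T_X\to j_*\mathcal{Q}\to 0$ identifying the cokernel of $d\pi_Y$ with the pushforward of the tautological quotient bundle on $E=\P(\cN_{Y/X})$, followed by Grothendieck--Riemann--Roch for the divisorial embedding $j$. Your conventions are internally consistent (with $E$ parametrizing lines in $\cN_{Y/X}$ one indeed has $\cO_E(E)\cong\cO_E(-1)$), and the resulting bookkeeping reproduces the stated formula.
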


The following formula computes the second Chern character of a projective space bundle. 
 
\begin{lemma}\label{B0}\cite[Lemma 4.1]{dJ-S:2fanos_1}\label{P-bundle}
Let $Z$ be a smooth projective variety and let $\cE$ be a rank $r$ vector bundle on $Z$. Denote by $p_Z:\PP(\cE)\to Z$ the natural projection and set $\xi=c_1(\cO_{\PP(\cE)}(1))$. 
Then
$$ch_2(\PP(\cE)) =p_Z^*\big(ch_2(Z)+ch_2(\cE)\big)-p_Z^*c_1(\cE)\cdot\xi+\frac{r}{2}\xi^2.$$
\end{lemma}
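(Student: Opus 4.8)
The plan is to compute the total Chern character $\ch(T_{\PP(\cE)})$ from the two natural short exact sequences attached to the bundle $p_Z$, and then to extract the degree-two component. The computation relies on two formal properties of the Chern character: it is additive on short exact sequences (so it descends to a homomorphism from the Grothendieck group of $\PP(\cE)$), and it is multiplicative under tensor products, $\ch(\cF\otimes\cG)=\ch(\cF)\cdot\ch(\cG)$. In particular, for the relative hyperplane bundle one has $\ch(\cO_{\PP(\cE)}(1))=e^{\xi}=1+\xi+\tfrac12\xi^2+\cdots$.

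First I would split off the vertical tangent directions using the relative tangent sequence of the smooth morphism $p_Z$,
$$0\to T_{\PP(\cE)/Z}\to T_{\PP(\cE)}\to p_Z^*T_Z\to 0,$$
so that $\ch(T_{\PP(\cE)})=\ch(T_{\PP(\cE)/Z})+p_Z^*\ch(T_Z)$. Next I would identify the relative tangent bundle through the relative Euler sequence. In Grothendieck's convention $\PP(\cE)=\Proj(\Sym\cE)$ used in this paper, the tautological surjection $p_Z^*\cE\onto\cO_{\PP(\cE)}(1)$ has a rank $r-1$ kernel $\mathcal{S}$, and $T_{\PP(\cE)/Z}\cong\mathcal{S}^\vee\otimes\cO_{\PP(\cE)}(1)$. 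Dualizing $0\to\mathcal{S}\to p_Z^*\cE\to\cO_{\PP(\cE)}(1)\to 0$ and twisting by $\cO_{\PP(\cE)}(1)$ produces
$$0\to\cO_{\PP(\cE)}\to p_Z^*\cE^\vee\otimes\cO_{\PP(\cE)}(1)\to T_{\PP(\cE)/Z}\to 0,$$
whence $\ch(T_{\PP(\cE)/Z})=p_Z^*\ch(\cE^\vee)\cdot e^{\xi}-1$. Using $\ch_k(\cE^\vee)=(-1)^k\ch_k(\cE)$ gives $\ch(\cE^\vee)=r-c_1(\cE)+\ch_2(\cE)-\cdots$.

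Finally I would expand to degree two. The constant $-1$ only affects degree zero, and the degree-two part of $\ch(\cE^\vee)\cdot e^{\xi}$ is $\tfrac{r}{2}\xi^2-c_1(\cE)\cdot\xi+\ch_2(\cE)$; pulling back by $p_Z^*$ and adding the degree-two part $p_Z^*\ch_2(Z)$ of $p_Z^*\ch(T_Z)$ yields exactly the stated formula. The only genuine obstacle is keeping the conventions straight: the minus sign in front of $p_Z^*c_1(\cE)\cdot\xi$ comes precisely from the fact that it is $\cE^\vee$, rather than $\cE$, that appears in the relative Euler sequence under the quotient convention $\PP(\cE)=\Proj(\Sym\cE)$. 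With the opposite (``lines'') convention one would find $\cE$ in its place and the opposite sign, so this is a point to check against the paper's stated convention rather than a computational difficulty.
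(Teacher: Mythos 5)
Your derivation is correct: the relative tangent sequence plus the relative Euler sequence $0\to\cO_{\PP(\cE)}\to p_Z^*\cE^\vee\otimes\cO_{\PP(\cE)}(1)\to T_{\PP(\cE)/Z}\to 0$ (valid in the Grothendieck convention $\PP(\cE)=\Proj(\Sym\cE)$) gives exactly the stated degree-two expansion. The paper itself offers no proof, only a citation to \cite[Lemma 4.1]{dJ-S:2fanos_1} together with a remark that the sign of the $p_Z^*c_1(\cE)\cdot\xi$ term depends on whether one takes $\Proj(\Sym\cE)$ or $\Proj(\Sym\cE^*)$; your argument is the standard one and correctly isolates that convention issue as the source of the sign.
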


\noindent (We note that in \cite{dJ-S:2fanos_1} the notation $\PP\cE$ stands for $\Proj(\Sym \cE^*)$, and 
this accounts for the sign difference between the formula in Lemma~\ref{P-bundle} and the one from 
\cite[Lemma 4.1]{dJ-S:2fanos_1}.)

Applying  Lemma \ref{Lem:ch_blowup} to the  blow-up $\pi_Y:\tilde X_Y\to X$ yields:
\begin{equation} \label{formula_blowup}
ch_2(\tilde X_Y)=\pi_Y^* ch_2(X)+\frac{c_{Y/X}+1}{2}E^2_Y-{j_Y}_*\pi_Y^*c_1(\cN_Y),    
\end{equation}
where $j_Y:E_Y\into \tilde X_Y$ denotes the natural inclusion. 

Recall that, as a projective space bundle over $Z$, we have $\tilde X_Y\cong\PP(\cE_Z)$, where
$\cE_Z\cong{\cN_Z}^*\oplus\cO_Z$  and $\cN_Z$ is the normal bundle of $Z$ in $X$. 
We write $\xi_Z:=c_1(\cO_{\PP(\cE_Z)}(1))$. 
Applying Lemma \ref{P-bundle} to the projective bundle $p_Z: \PP(\cE_Z)\to Z$  yields:
\begin{equation} \label{formula_PE}
ch_2(\tilde X_Y)=p_Z^*\big(\ch_2(Z)+\ch_2(\cE_Z)\big)-p_Z^*c_1(\cE_Z)\cdot\xi+\frac{c_{Z/X} +1}{2}\xi_Z^2.
\end{equation}

Let $\tilde{\PP}\subseteq \P(V_Y\oplus V_Z)\times\P(V_Z)$ be the blow-up of $\P(V_Y\oplus V_Z)$ along $\P(V_Y)$.
As projective space bundle over $\P(V_Z)$, we have $\tilde{\PP}\cong \PP_{\P(V_Z)}((\cO(-1)\otimes V_Y)\oplus\cO)$. 
Let $\tilde{F}_z\cong \P^{\dim(V_Y)}$ be the fiber of $\tilde{\PP}\to\P(V_Z)$ over a general point $z\in Z\subset \P(V_Z)$. 
The blow-up map $\tilde{\PP}\to \P(V_Y\oplus V_Z)$ is an isomorphism along $\tilde{F}_z$. 
The image of $\tilde{F}_z$ in $\P(V_Y\oplus V_Z)$ is the linear subspace spanned by $\P(V_Y)$ and the point $z\in Z\subset\P(V_Z)$. 
Note that  $\tilde X_Y$ is the proper transform  of $X$ in $\tilde{\PP}$, and the map $p_Z: \tilde X_Y\to Z$ is the restriction to $\tilde X_Y$ of the second projection $\tilde{\PP}\to\P(V_Z)$.
Consider the fiber $F_z\cong \PP^{c_{Z/X}}$ of the projective bundle $p_Z: \tilde X_Y\to Z$ over the point $z\in Z$. 

\begin{claim}\label{linear subspace}
The inclusion $F_z\subseteq \tilde{F}_z$ embeds $F_z\cong \PP^{c_{Z/X}}$ as a linear subspace in $\tilde{F}_z\cong\PP^{\dim(V_Y)}$.
\end{claim}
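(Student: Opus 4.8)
The plan is to pass through the blow-down isomorphism to reduce the claim to linearity of the image $\pi_Y(F_z)$ inside $L_z:=\langle \P(V_Y),z\rangle$, and then to exhibit that image as a projective cone whose smoothness forces linearity. First I would record the reduction. Since $p_Z$ is the restriction to $\tilde{X}_Y$ of the bundle projection $\tilde{\PP}\to\P(V_Z)$, the fiber $F_z$ is contained in $\tilde{F}_z$. By hypothesis the blow-down $\tilde{\PP}\to\P(V_Y\oplus V_Z)$ restricts to an isomorphism of projective spaces $\tilde{F}_z\xrightarrow{\sim}L_z$ onto the linear space $L_z\cong\PP^{\dim(V_Y)}$, under which $F_z$ is linear precisely when its image $\overline{F}_z:=\pi_Y(F_z)\subseteq L_z$ is a linear subspace. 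Note that $\overline{F}_z\cong F_z\cong\PP^{c_{Z/X}}$, so $\overline{F}_z$ is smooth and irreducible of dimension $c_{Z/X}$; in particular it contains the smooth point $z$ and is not contained in the hyperplane $\P(V_Y)\subset L_z$.

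Next I would identify $\overline{F}_z$ geometrically. By $G$-equivariance it suffices to treat $z=[v_Z]$, whose stabilizer is $P_Z$. Over the dense orbit $U$, where $\pi_Y$ is an isomorphism and $p_Z$ is the linear projection $[w_Y+w_Z]\mapsto[w_Z]$ away from $\P(V_Y)$, one computes $\overline{F}_z\cap U=\{[g\,v_Y+\chi(g)\,v_Z] : g\in P_Z\}$, where $\chi$ is the character through which $P_Z$ acts on $v_Z$. The key step is to show that $\overline{F}_z$ is the projective cone with vertex $z$ over $B_z:=\overline{F}_z\cap\P(V_Y)$. Given a point $[g\,v_Y+\chi(g)\,v_Z]$, one precomposes $g$ with a cocharacter $\lambda$ of the maximal torus for which $\langle \mu_Z-\mu_Y,\lambda\rangle\neq 0$; this is possible since $v_Y$ and $v_Z$ are lowest weight vectors of irreducible representations with distinct highest weights, so their weights satisfy $\mu_Y\neq\mu_Z$. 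Letting the parameter run and taking closure then sweeps out the entire line through $[g\,v_Y]\in\P(V_Y)$ and $z=[v_Z]$, all of which lies in $\overline{F}_z$.

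Finally I would invoke the smoothness criterion for cones. The multiplicity of a projective cone at its vertex equals the degree of its base; since $\overline{F}_z\cong\PP^{c_{Z/X}}$ is smooth at $z$ and $z\notin\P(V_Y)\supseteq B_z$, the base $B_z$ must have degree $1$, i.e.\ $B_z$ is a linear subspace of $\P(V_Y)$. A cone with linear base is itself linear, so $\overline{F}_z$ is a linear subspace of $L_z$, and transporting back through $\tilde{F}_z\xrightarrow{\sim}L_z$ shows that $F_z$ is a linear subspace of $\tilde{F}_z$, which is the claim.

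I expect the main obstacle to be the middle step, namely verifying the cone structure. Establishing it rigorously requires the torus-sweeping argument together with a dimension count: since $z\in\overline{F}_z$ lies off the hyperplane $\P(V_Y)$, the intersection $B_z=\overline{F}_z\cap\P(V_Y)$ is pure of dimension $c_{Z/X}-1$, so the cone $\mathrm{Cone}_z(B_z)$—a priori only contained in $\overline{F}_z$—has dimension $c_{Z/X}$ and hence exhausts the irreducible $\overline{F}_z$. The virtue of this approach is that it is uniform across Pasquier's list: the abstract smoothness of the fibers $F_z\cong\PP^{c_{Z/X}}$ does all the work, so one avoids any case-by-case representation-theoretic analysis of the orbit closure $\overline{P_Z\cdot[v_Y]}$.
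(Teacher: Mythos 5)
Your argument is correct, but it takes a genuinely different route from the paper's. The paper's proof is purely bundle-theoretic and takes one line: the inclusion $\tilde X_Y\cong\PP(\cE_Z)\subset\tilde{\PP}\cong\PP_{\P(V_Z)}\big((\cO(-1)\otimes V_Y)\oplus\cO\big)$ is induced by an inclusion of vector bundles over $Z$, namely ${\cE_Z}^*\cong\cN_Z\oplus\cO_Z\hookrightarrow(\cO_Z(1)\otimes V_Y)\oplus\cO_Z$ obtained from the composite $\cN_{Z/X}\subseteq \cN_{Z/\P(V_Y\oplus V_Z)}\to{\cN_{\P(V_Z)/\P(V_Y\oplus V_Z)}}_{|Z}$, and an embedding of projective bundles induced by a fiberwise-injective bundle map is linear on each fiber; this moreover identifies $F_z$ explicitly as $\P\big((\cN_{Z/X})_z\oplus\CC\big)$ inside $\P(V_Y\oplus\CC)$. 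Your route instead blows down to $L_z$, uses the $P_Z$-orbit description of the open part of the fiber, sweeps with a one-parameter subgroup $\lambda$ satisfying $\langle\mu_Z-\mu_Y,\lambda\rangle\neq 0$ (available because $\omega_Y\neq\omega_Z$) to exhibit $\overline F_z$ as a cone with vertex $z$, and then uses smoothness at the vertex to force linearity. What your approach buys is independence from the normal-bundle identification: the only inputs are the two-orbit structure and the abstract isomorphism $F_z\cong\PP^{c_{Z/X}}$; what it costs is length and a little care at two places. First, the containment $\mathrm{Cone}_z(B_z)\subseteq\overline F_z$ is \emph{not} a priori, contrary to your parenthetical remark (the cone over a hyperplane section of a variety is generally not contained in the variety); it is exactly what the sweeping argument delivers, since the union of all lines through $z$ contained in the closed set $\overline F_z$ is itself closed and, by the sweep, contains the dense subset $\overline F_z\cap U$ --- once $\overline F_z$ is known to be a union of lines through $z$, it automatically equals the cone over its section by the hyperplane $\P(V_Y)\not\ni z$, and no separate dimension count is needed. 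Second, you should note explicitly that $z\in\overline F_z$ (it is the limit of the sweep as the parameter degenerates, equivalently the image of the $\cO_Z$-section of $p_Z$), since smoothness of $\overline F_z$ \emph{at} $z$ is what drives the final step; that step itself is fine, either via multiplicity at the vertex equals degree of the (reduced, irreducible) base, or more directly because the tangent cone of a cone at its vertex is the cone itself, so smoothness there forces linearity.
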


\begin{proof}
The preimage of $Z$ via the second projection $\tilde{\PP}\to\P(V_Z)$ is the projective bundle $\PP_{Z}((\cO_Z(-1)\otimes V_Y)\oplus\cO_Z)\to Z$. 
The inclusion
$\PP(\cE_Z)\cong\tilde X_Y\subset\tilde{\PP}$ comes from the canonical embedding 
$${\cE_Z}^*\cong\cN_Z\oplus\cO_Z\subset {\cN_{\P(V_Z)/\P(V_Y\oplus V_Z)}}_{|Z}\oplus\cO_Z\cong (\cO_Z(1)\otimes V_Y)\oplus\cO_Z,$$
and the claim follows. 
\end{proof}

In all cases, $c_{Z/X}\geq 2$. Let $S'_Z$ be a general linear subspace of dimension $2$ in $F_z\cong\PP^{c_{Z/X}}$ and let $S_Z$ be its image in $X$. We compute $\ch_2(\tilde X_Y)\cdot S'_Z$ and $\ch_2(X)\cdot S_Z$ from the above formulae. 
Clearly, $S'_Z\cdot \xi_Z^2=1$ and, as ${p_Z}_*S'=0$, formula \eqref{formula_PE} gives
$$ch_2(\tilde X_Y)\cdot S'_Z=\frac{c_{Z/X} +1}{2}.$$

The inclusion $E_Y=\PP_Z(\cN_Z^*)\subset \PP_Z(\cE_Z)= \tilde X_Y$
comes from the natural surjection 
$$
\cE_Z\cong \cN_Z^*\oplus\cO_Z \onto \cN_Z^*.
$$
It follows that the scheme theoretic intersection $\ell_Z:=S'_Z\cap E_Y$ is a line in $S'_Z\cong\PP^2$.  
Note that by Claim \ref{linear subspace}, the image of $\ell_Z$ in $X\subseteq \P(V_Y\oplus V_Z)$ is a line.  
It follows that $E_Y^2\cdot S'_Z=1$. Next we compute
$$S'_Z\cdot \big({j_Y}_*\pi_Y^*c_1(\cN_Y)\big)=\big(\ell_Z\cdot j_Y^*\pi_Y^*c_1(\cN_Y)\big)_{E_Y}=({\pi_Y}_*\ell_Z)\cdot c_1(\cN_Y).$$
Since $c_1(\cN_Y)=c_1(T_X)_{|Y}-c_1(T_Y) = (c_1(X)-c_1(Y))\cdot H_{|Y}$, and $H_{|Y}\cdot\ell_Z=1$, we get  
$$S'_Z\cdot \big({j_Y}_*\pi_Y^*c_1(\cN_Y)\big)=c_1(X)-c_1(Y).$$
It follows from (\ref{formula_blowup}) and (\ref{formula_PE}) that 
$$ch_2(\tilde X_Y)\cdot S'_Z=ch_2(X)\cdot S_Z+\frac{c_{Y/X}+1}{2}-\big(c_1(X)-c_1(Y)\big),$$
and we obtain the following theorem: 
\begin{thm}
Let $X$ be one of the non-homogeneous horospherical varieties in Pasquier's list. In the notations of this section and Section \ref{section_horospherical}, 
if $S_Z$ is the image in $X$ of a general plane in a general fiber of the projective bundle 
$p_Z: \tilde X_Y\to Z$, then 
$$ch_2(X)\cdot S_Z=\frac{c_{Z/X}-c_{Y/X}}{2}+\big(c_1(X)-c_1(Y)\big).$$
Similarly, if $S_Y$ is the image in $X$ of a general plane in a general fiber of the projective bundle 
$p_Y: \tilde X_Z\to Y$, then we have
$$ch_2(X)\cdot S_Y=\frac{c_{Y/X}-c_{Z/X}}{2}+\big(c_1(X)-c_1(Z)\big).$$
and the following intersection numbers:
\begin{itemize}
    \item [(1) ] $ch_2(X)\cdot S_Y=-3(n-2)/2$, $ch_2(X)\cdot S_Z=n/2$, 
    \item [(2) ] $ch_2(X)\cdot S_Y=3/2$, $ch_2(X)\cdot S_Z=3/2$, 
    \item [(3) ] $ch_2(X)\cdot S_Y=(-2n-2+3m)/2$, $ch_2(X)\cdot S_Z=(2n+4-3m)/2$, 
    \item [(4) ] $ch_2(X)\cdot S_Y=-1$, $ch_2(X)\cdot S_Z=1$, 
    \item [(5) ] $ch_2(X)\cdot S_Y=-1$, $ch_2(X)\cdot S_Z=1$.
\end{itemize}
In particular, the only $2$-Fano non-homogeneous horospherical varieties in Pasquier's list are the varieties $X^2$ and $X^3(n,m)$ for $(n,m)=(3k,2k+1)$ for some $k\geq 1$.
\end{thm}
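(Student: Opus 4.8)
The two displayed formulae are an immediate consequence of the computation carried out just before the statement, and the five explicit numbers are then pure substitution; the real content of the theorem lies in the final classification, so that is where I would concentrate. The formula for $\ch_2(X)\cdot S_Z$ falls out of the two identities already established above: the projective bundle computation gives $\ch_2(\tilde X_Y)\cdot S'_Z=\tfrac{c_{Z/X}+1}{2}$ (since ${p_Z}_*S'_Z=0$ kills the $p_Z^*$ terms in \eqref{formula_PE} and $\xi_Z^2\cdot S'_Z=1$), while the blow-up formula \eqref{formula_blowup} gives $\ch_2(\tilde X_Y)\cdot S'_Z=\ch_2(X)\cdot S_Z+\tfrac{c_{Y/X}+1}{2}-(c_1(X)-c_1(Y))$, using $E_Y^2\cdot S'_Z=1$ and $S'_Z\cdot{j_Y}_*\pi_Y^*c_1(\cN_Y)=c_1(X)-c_1(Y)$ from Claim~\ref{linear subspace}. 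Equating the two expressions and solving yields $\ch_2(X)\cdot S_Z=\tfrac{c_{Z/X}-c_{Y/X}}{2}+(c_1(X)-c_1(Y))$. For the companion formula for $\ch_2(X)\cdot S_Y$ I would simply repeat the whole argument with the roles of $Y$ and $Z$ exchanged: blowing up $X$ along $Z$ realizes $\tilde X_Z$ as the projective bundle $p_Y:\tilde X_Z\to Y$, and the identical chain of computations with $Y\leftrightarrow Z$ throughout gives $\ch_2(X)\cdot S_Y=\tfrac{c_{Y/X}-c_{Z/X}}{2}+(c_1(X)-c_1(Z))$. This interchangeability is exactly the symmetry of Pasquier's construction recorded in Remark~\ref{Aut(X)}, and since $c_{Z/X}\ge 2$ (and likewise $c_{Y/X}\ge 2$) in every case, the test surfaces $S_Z$ and $S_Y$ genuinely exist.

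Next I would read off the five explicit intersection numbers by substituting the entries of Table~\ref{table} into the two formulae. This is routine arithmetic; for instance, in type $(3)$ one has $c_{Z/X}=2(n+1-m)$, $c_{Y/X}=m$, $c_1(X)-c_1(Y)=1$ and $c_1(X)-c_1(Z)=0$, giving $\ch_2(X)\cdot S_Z=\tfrac{2n+4-3m}{2}$ and $\ch_2(X)\cdot S_Y=\tfrac{-2n-2+3m}{2}$, and the remaining four cases are handled identically.

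Finally, for the classification itself I would argue in two directions. For the non-examples, recall that if $X$ is $2$-Fano then $\ch_2(T_X)$ is strictly positive on every surface, in particular on $S_Y$ and $S_Z$. Inspecting the list, types $(1)$, $(4)$ and $(5)$ have $\ch_2(X)\cdot S_Y<0$, so none of them is $2$-Fano; in type $(3)$, requiring both $\tfrac{-2n-2+3m}{2}>0$ and $\tfrac{2n+4-3m}{2}>0$ forces $2n+2<3m<2n+4$, hence $3m=2n+3$, which means $n\equiv 0\pmod 3$ and $(n,m)=(3k,2k+1)$ for some $k\ge 1$. This leaves only $X^2$ (where both numbers equal $\tfrac32$) and the special odd symplectic Grassmannians $X^3(3k,2k+1)$ as candidates.

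The converse—that these surviving candidates are genuinely $2$-Fano—is the one point that does not follow from the two test surfaces, and I expect it to be the main obstacle: positivity of $\ch_2$ on $S_Y$ and $S_Z$ alone does not a priori control every projective surface in $X$, so a separate input is needed to rule out a surface class on which $\ch_2$ could fail to be positive. I would supply this from the already-known results that $X^2$ is $2$-Fano (\cite[Proposition 34]{2fanos2}) and that $X^3(3k,2k+1)$ is $2$-Fano (\cite[Example 5.6]{2fanos}); together with the sign analysis above, these complete the classification and establish the theorem.
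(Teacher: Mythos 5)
Your proposal is correct, and up to the final step it is exactly the paper's argument: the two displayed formulae are obtained by equating the blow-up computation \eqref{formula_blowup} with the projective-bundle computation \eqref{formula_PE} on the test surface $S'_Z$ (and symmetrically for $S_Y$), and the five sets of numbers are substitution from Table~\ref{table}. You also correctly identify the one genuine issue: positivity of $\ch_2(X)$ on the two test surfaces does not by itself give the $2$-Fano property, since that requires positivity on \emph{every} surface class. Where you diverge is in how this gap is closed. You outsource it entirely to the literature, citing \cite[Proposition 34]{2fanos2} for $X^2$ and \cite[Example 5.6]{2fanos} for $X^3(3k,2k+1)$; this is legitimate (the paper itself records both facts), but it makes the forward direction of the classification depend on external computations done by different methods. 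The paper instead closes the gap intrinsically: for $X^2$ it invokes $b_4(X)=1$ (via \cite[Fact 1.8]{Perrin+} and \cite[Lemma 3.1]{hfanos}), and for $X^3(n,m)$ it uses that the classes of $S_Y$ and $S_Z$ generate $H_4(X,\ZZ)$ (\cite[Section 1.7]{Perrin+}), so that positivity on these two surfaces already implies positivity on all surfaces. This is why precisely these two test surfaces were computed in the first place, and it makes the whole classification follow from the single uniform computation, with the earlier references serving only as independent confirmation.
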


\begin{proof}
The formulae for $ch_2(X)\cdot S_Z$ and $ch_2(X)\cdot S_Y$ follow from the preceeding discussion. 
The intersection numbers then follow immediately from Table \ref{table}. Note that in Cases (1), (4) and (5) we have $ch_2(X)\cdot S_Y<0$, and hence $X$ is not $2$-Fano. 
In case (2), it follows from \cite[Fact 1.8]{Perrin+} and \cite[Lemma 3.1]{hfanos} that $b_4(X)=1$. 
Since $ch_2(X^2)\cdot S_Y=ch_2(X^2)\cdot S_Z=3/2$, we conclude that $X^2$ is $2$-Fano. 
This also follows from \cite[Proposition 34]{2fanos2}. 
In Case (3), the two conditions 
$ch_2(X)\cdot S_Y>0$ and  $ch_2(X)\cdot S_Z>0$ hold if and only if $3m=2n+3$. 
The latter condition is equivalent to $(n,m)=(3k,2k+1)$ for some $k\geq 1$.
Since the classes of the surfaces $S_Y$ and $S_Z$ generate $\text H_4(X,\ZZ)$ 
(see \cite[Section 1.7]{Perrin+}),  we conclude that $X^3(3k,2k+1)$ is $2$-Fano. 
This also follows from \cite[Example 5.6]{2fanos}. 
\end{proof}

\section{2-Fano varieties and stability}\label{section_K-stability}

We end this paper by reviewing all the known examples of $2$-Fano manifolds, 
and investigate the relation between the $2$-Fano condition and stability conditions. 
The notion of $K$-polystability for Fano manifolds has become very proeminent,  
specially since the establishment of the Yau-Tian-Donaldson conjecture, 
which asserts that the existence of a K\"ahler-Einstein metric on a Fano manifold is equivalent to its $K$-polystability. 
As we shall see, even though the $2$-Fano condition seems very restrictive, it does not imply $K$-polystability. 
Another related but weaker condition is the (slope) stability of the tangent bundle. 
Until very recently, there were no known examples of Fano manifolds with Picard rank $1$ and non-stable tangent bundle.
In \cite{Kanemitsu}, Kanemitsu verified which horospherical varieties have stable tangent bundle, presenting 
the first examples of Fano manifolds of Picard rank $1$ and non-stable tangent bundle.
We do not know of any example of $2$-Fano manifold with unstable tangent bundle, and one may ask whether 
the $2$-Fano condition implies stability of the tangent bundle.

All the known examples of $2$-Fano manifolds fall under one of three categories: 
complete intersections in weighted projective spaces, horospherical varieties with Picard number $1$, 
and linear sections of rational homogeneous spaces. We briefly discuss each of these classes. 
It is remarkable that all the known examples have Picard number $1$.

\subsection{Complete intersections in weighted projective spaces.}\label{ci_in_weighted} 
Let $\PP(a_0,\ldots, a_n)$ be a weighted projective space, and assume that 
$\gcd(a_0,\ldots, \hat a_i, \ldots a_n)=1$ for every $i\in\{0, \ldots, n\}$.
Denote by $H$ the effective generator of the class group of $\PP(a_0,\ldots, a_n)$. 
Let  $X$ be a smooth complete intersection of hypersurfaces with classes $d_1H$,..., $d_cH$
in $\PP(a_0,\ldots, a_n)$. 
Then the Chern character of $X$ is given by
$$\ch(X)=(n-c)+\sum_{k=1}^n\frac{a_0^k+\ldots+a_n^k-\sum d_i^k}{k!}c_1(H_{|X})^k.$$
It follows that $X$ is $2$-Fano if and only if $\sum d_i^2<\sum a_i^2$.

By \cite[Corollary 0.3]{PW}, the tangent bundle $T_X$ of any such complete intersection $X$ is stable 
as long as $\dim(X)\geq 3$.

\subsection{Horospherical varieties with Picard number $1$.}\label{Horospherical} 

We recall that horospherical varieties with Picard number $1$  were classified by Pasquier in \cite{Pasquier}.
They are either rational homogeneous, or are two-orbit varieties belonging to one of the 5 classes in Pasquier's list.
The classification of rational homogeneous $2$-Fano manifolds with Picard number $1$ was obtained in  \cite{hfanos}.
Together with Theorem~\ref{main_thm}, it yields the following classification of $2$-Fano
horospherical varieties with Picard number $1$. 
We refer to \cite[Section 3]{hfanos} for the notation regarding rational homogeneous spaces.

\begin{thm} \label{thm:horospherical}
The following is the complete list of $2$-Fano horospherical varieties with Picard number $1$:
\begin{enumerate}
\item Rational homogeneous spaces:
\begin{enumerate}
\item[-] $A_n/P^k$, for $k=1,n$ and for $n= 2k-1,2k$ when $2 \leq k \leq \frac{n+1}{2}$;
\item[-] $B_n/P^k$, for $k=1,n$ and for $2n=3k+1$ when $2 \leq k \leq n-1$;
\item[-] $C_n/P^k$, for $k=1,n$ and for $2n=3k-2$ when $2 \leq k \leq n-1$;
\item[-] $D_n/P^k$, for $k=1,n-1,n$ and for $2n=3k+2$ when $2 \leq k < n-1$;
\item[-] $E_n/P^k$, for $n=6,7,8$ and $k = 1,2,n$;
\item[-]$F_4/P^4$;
\item[-]$G_2/P^k$, for $k=1,2$.
\end{enumerate}
\item Two-orbit varieties from Pasquier's list:
\begin{enumerate}
\item[-] $X^2$;
\item[-] $X^3(3k,2k+1)$ for $k\geq 1$.
\end{enumerate}
\end{enumerate}
\end{thm}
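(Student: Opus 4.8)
The plan is to reduce the entire statement to the two intersection-number formulas already derived in the preceding discussion, namely
\[
\ch_2(X)\cdot S_Z=\frac{c_{Z/X}-c_{Y/X}}{2}+\big(c_1(X)-c_1(Y)\big),\qquad
\ch_2(X)\cdot S_Y=\frac{c_{Y/X}-c_{Z/X}}{2}+\big(c_1(X)-c_1(Z)\big),
\]
and then to a table lookup. First I would confirm the second formula: by symmetry it is obtained from the first by interchanging the roles of $Y$ and $Z$ throughout, using the dual projective bundle structure $p_Y:\tilde X_Z\to Y$ that was set up in Section~\ref{section_horospherical}. The entire blow-up-plus-projective-bundle computation (formulas \eqref{formula_blowup} and \eqref{formula_PE}, together with Claim~\ref{linear subspace}) goes through verbatim with $Y$ and $Z$ swapped, so no new geometric input is needed.

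Next I would substitute the numerical invariants from Table~\ref{table} into both formulas, case by case. This is purely arithmetic. For instance, in type $X^1(n)$ one has $c_{Y/X}=2$, $c_{Z/X}=n$, $c_1(X)=n+2$, $c_1(Y)=n+1$, $c_1(Z)=2n$, giving $\ch_2(X)\cdot S_Z=\tfrac{n-2}{2}+1=\tfrac{n}{2}$ and $\ch_2(X)\cdot S_Y=\tfrac{2-n}{2}+(n+2-2n)=-\tfrac{3(n-2)}{2}$, matching the listed values; the remaining four types are analogous. I would present these substitutions compactly rather than writing out every line.

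The only conceptual step — and the main obstacle — is passing from the sign of two specific intersection numbers to the genuine $2$-Fano property, which requires $\ch_2(X)\cdot S>0$ for \emph{every} surface $S\subset X$. For the cases where $\ch_2(X)\cdot S_Y<0$ (types (1), (4), (5), and type (3) outside the distinguished range), this direction is immediate: exhibiting a single surface with nonpositive $\ch_2$ already violates the $2$-Fano condition, so these varieties are excluded. The delicate direction is establishing positivity on all of $H_4(X,\ZZ)$ for the surviving cases. Here I would invoke the structure of the fourth homology: for $X^2$ one uses $b_4(X)=1$ (via \cite[Fact 1.8]{Perrin+} and \cite[Lemma 3.1]{hfanos}), so positivity on the single generator $S_Y$ (equivalently $S_Z$) suffices; for $X^3(3k,2k+1)$ one uses that the classes of $S_Y$ and $S_Z$ generate $H_4(X,\ZZ)$ (see \cite[Section 1.7]{Perrin+}), so that $\ch_2(X)\cdot S_Y>0$ and $\ch_2(X)\cdot S_Z>0$ together force positivity against an arbitrary surface class.

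Finally I would solve the inequalities in type (3). Both $\ch_2(X)\cdot S_Y=\tfrac{3m-2n-2}{2}>0$ and $\ch_2(X)\cdot S_Z=\tfrac{2n+4-3m}{2}>0$ pin down $3m$ between $2n+2$ and $2n+4$, forcing the single integer solution $3m=2n+3$; writing $m=2k+1$ then yields $n=3k$, with $k\geq 1$ to respect the constraint $2\leq m\leq n$. This recovers exactly the family $X^3(3k,2k+1)$, completing the classification and confirming consistency with \cite[Proposition 34]{2fanos2} and \cite[Example 5.6]{2fanos}.
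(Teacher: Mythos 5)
Your treatment of the two-orbit varieties is correct and follows the paper's own route exactly: the same pair of intersection-number formulas obtained from the blow-up and projective-bundle lemmas, the same substitution of the invariants from Table~\ref{table}, the same use of $b_4(X^2)=1$ and of the fact that $[S_Y]$ and $[S_Z]$ generate $H_4(X,\ZZ)$ for $X=X^3(n,m)$ to upgrade positivity on two test surfaces to positivity against every surface class, and the same resolution of the two inequalities to the single integral solution $3m=2n+3$, i.e.\ $(n,m)=(3k,2k+1)$. The arithmetic in your sample case $X^1(n)$ and in type (3) checks out against the paper's listed values.

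The gap is that the statement you are asked to prove is the \emph{complete} list of $2$-Fano horospherical varieties of Picard number $1$, and your argument only covers part (2). Two ingredients are missing. First, the completeness of the dichotomy ``rational homogeneous or one of the five families $X^1,\dots,X^5$'' rests on Pasquier's classification \cite{Pasquier}; you use it implicitly but never invoke it. Second, and more substantially, part (1) --- deciding which rational homogeneous spaces $G/P$ of Picard number $1$ are $2$-Fano --- is a separate classification that your two formulas cannot reach: they are built from the two-orbit geometry $X=Y\cup U\cup Z$ and the bundles $p_Z:\tilde X_Y\to Z$, $p_Y:\tilde X_Z\to Y$, none of which exist for homogeneous $X$. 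The paper disposes of this part by citing the classification from \cite{hfanos}; you must either do the same or supply an independent computation of $\ch_2(G/P)$ for each type, which is a nontrivial Lie-theoretic calculation and not a ``table lookup'' in the sense of your plan.
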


\noindent We recall that $X^2$ is isomorphic to a general hyperplane section of the $10$-dimensional orthogonal Grassmannian $OG_+(5,10)$, while each $X^3(n,m)$ is an odd symplectic Grassmannian.

All rational homogeneous spaces are $K$-polystable and have stable tangent bundle. 
On the other hand, as already mentioned in \ref{Aut(X)}, the automorphism group $\Aut(X)$ of any two-orbit variety $X$ from Pasquier's list is non-reductive. This implies that $X$ is not $K$-polystable by Matsushima's criterion, which asserts that  
the existence of a K\"ahler-Einstein metric on a Fano manifold implies the reductivity of its automorphism group. 

Kanemitsu investigated the stability of the tangent bundle of the two-orbit varieties in Pasquier's list in \cite{Kanemitsu}.
The ones with stable tangent bundle are precisely the following:  $X^1(3)$, $X^2$, $X^3(n,m)$, with $n\geq 2$ and $2\leq m \leq n$, and $X^5$. 
In particular, we see that all $2$-Fano horospherical varieties with Picard number $1$ have stable tangent bundle.

\subsection{Linear sections of rational homogeneous spaces.}\label{linear_sections} 
Some linear sections of rational homogeneous spaces of Picard rank $1$ under their primitive embeddings are known to be 
$2$-Fano. Here is the known list: 
\begin{enumerate}
\item Let $X$ be a general codimension $c$ linear section of the Grassmannian $G(k,n)$ under the Pl\"ucker embedding,
with $2\leq k\leq\frac{n}{2}$. Then $X$ is $2$-Fano if and only if $n=2k$ and $c\leq 1$ by \cite[Proposition 32]{2fanos2}. 
\item Let $X$ be a general codimension $c$ linear section of the orthogonal Grassmannian $OG_+(k,2k)$ 
under the half-spinor embedding, with $k\geq 4$. Then $X$ is $2$-Fano if and only if $c\leq 3$ by \cite[Proposition 34]{2fanos2}.
\end{enumerate}

\end{document}